\definecolor{webgreen}{rgb}{0,.5,0}
\definecolor{webbrown}{rgb}{.6,0,0}
\DeclareMathOperator{\IB}{LBP}
\begin{document}

\theoremstyle{plain}
\newtheorem{theorem}{Theorem}
\newtheorem{corollary}[theorem]{Corollary}
\newtheorem{lemma}[theorem]{Lemma}
\newtheorem{proposition}[theorem]{Proposition}

\theoremstyle{definition}
\newtheorem{definition}[theorem]{Definition}
\newtheorem{example}[theorem]{Example}
\newtheorem{conjecture}[theorem]{Conjecture}

\theoremstyle{remark}
\newtheorem{remark}[theorem]{Remark}

\author{Daniel Gabric\footnote{
Department of Math/Stats,
University of Winnipeg,
Winnipeg, MB R3B 2E9,
Canada; {\tt d.gabric@uwinnipeg.ca}.}
\quad and Jeffrey Shallit\footnote{School of Computer Science, University of Waterloo, Waterloo, ON  N2L 3G1, Canada; {\tt shallit@uwaterloo.ca}.}}

\date{}

\title{Smallest and Largest Block Palindrome Factorizations}

\maketitle

\begin{abstract}
A \emph{palindrome} is a word that reads the same forwards and backwards. A \emph{block palindrome factorization} (or \emph{BP-factorization}) is a factorization of a word into blocks that becomes palindrome if each identical block is replaced by a distinct symbol. We call the number of blocks in a BP-factorization the \emph{width} of the BP-factorization. The \emph{largest BP-factorization} of a word $w$ is the BP-factorization of $w$ with the maximum width. We study words with certain BP-factorizations. First, we give a recurrence for the number of length-$n$ words with largest BP-factorization of width $t$. Second, we show that the expected width of the largest BP-factorization of a word tends to a constant. Third, we give some results on another extremal variation of BP-factorization, the \emph{smallest BP-factorization}. A \emph{border} of a word $w$ is a non-empty word that is both a proper prefix and suffix of $w$. Finally, we conclude by showing a connection between words with a unique border and words whose smallest and largest BP-factorizations coincide.
\end{abstract}

\section{Introduction}
Let $\Sigma_k$ denote the alphabet $\{0,1,\ldots, k-1\}$. The length of a word $w$ is denoted by $|w|$. A \emph{border} of a word $w$ is a non-empty word that is both a proper prefix and suffix of $w$. A word is said to be \emph{bordered} if it has a border. Otherwise, the word is said to be \emph{unbordered}. For example, the French word {\tt entente} is bordered, and has two borders, namely {\tt ente} and {\tt e}.

It is well-known~\cite{Nielsen:1973} that the number $u_n$ of length-$n$ unbordered words over $\Sigma_k$ satisfies
\begin{equation}
u_n =
\begin{cases} 
      1, & \text{if }n=0;\\
      ku_{n-1} - u_{n/2}, & \text{if $n>0$ is even;}\\
      ku_{n-1}, & \text{if $n$ is odd.}
   \end{cases}\label{equation:unbordered}
\end{equation}

A \emph{palindrome} is a word that reads the same forwards as it does backwards. More formally, letting $w^R = w_nw_{n-1}\cdots w_1$ where $w=w_1w_2\cdots w_n$ and all $w_i$ are symbols, a palindrome is a word $w$ such that $w= w^R$. The definition of a palindrome is quite restrictive. The second half of a palindrome is fully determined by the first half. Thus, compared to all length-$n$ words, the number of length-$n$ palindromes is vanishingly small. But many words exhibit palindrome-like structure. For example, take the English word ${\tt marjoram}$. It is clearly not a palindrome, but it comes close. Replacing the block ${\tt jo}$ with a single letter turns the word into a palindrome. In this paper, we consider a generalization of palindromes that incorporates this kind of palindromic structure.

In the 2015 British Olympiad~\cite{Olympiad:2015}, the concept of a block palindrome factorization was first introduced. Let $w$ be a non-empty word. A \emph{block palindrome factorization} (or \emph{BP-factorization}) of $w$ is a factorization $w=w_{m}\cdots w_{1}w_0w_1\cdots w_m$ of a word such that $w_0$ is a possibly empty word, and every other factor $w_i$ is non-empty for all $i$ with $1\leq i \leq m$. We say that a BP-factorization $w_{m}\cdots w_{1}w_0w_1\cdots w_m$ is of \emph{width} $t$ where $t=2m+1$ if $w_0$ is non-empty and $t=2m$ otherwise. In other words, the width of a BP-factorization is the number of non-empty blocks in the factorization. The \emph{largest BP-factorization}\footnote{Largest BP-factorizations also appear in \url{https://www.reddit.com/r/math/comments/ga2iyo/i_just_defined_the_palindromity_function_on/}.}~\cite{Goto&etal:2018} of a word $w$ is a BP-factorization $w=w_{m}\cdots w_{1}w_0w_1\cdots w_m$ where $m$ is maximized (i.e., where the width of the BP-factorization is maximized). See~\cite{Mahalingam&Maity&Pandoh&Raghavan:2021, Mahalingam&Maity&Pandoh:2023} for more on the topic of BP-factorizations and block reversals. Kolpakov and Kucherov~\cite{Kolpakov&Kucherov:2009} studied a special case of BP-factorizations, the \emph{gapped palindrome}. If $w_0$ is non-empty and $|w_{i}|=1$ for all $i$ with $1\leq i \leq m$, then $w$ is said to be a \emph{gapped palindrome}. R\'egnier~\cite{Regnier:1992} studied something similar to BP-factorizations, but in her paper she was concerned with borders of borders. See~\cite{Frid&Puzynina&Zamboni:2013, Ravsky:2003} for results on factoring words into palindromes.

\begin{example}\label{example:abra}
We use the centre dot $\cdot$ to denote the separation between blocks in the BP-factorization of a word.

Consider the word {\tt abracadabra}. It has the following BP-factorizations:
\begin{gather*}
    {\tt abracadabra}, \\
    {\tt abra}\cdot {\tt cad} \cdot {\tt abra}, \\
    {\tt a}\cdot {\tt bracadabr}\cdot {\tt a}, \\
    {\tt a}\cdot {\tt br}\cdot {\tt acada}\cdot {\tt br}\cdot {\tt a}, \\
    {\tt a} \cdot {\tt br} \cdot {\tt a} \cdot {\tt cad} \cdot {\tt a} \cdot {\tt br} \cdot {\tt a}.
\end{gather*}
The last BP-factorization is of width $7$ and has the longest width; thus it is the largest BP-factorization of {\tt abracadabra}.
\end{example}

Let $w$ be a length-$n$ word. Suppose $w_{m}\cdots w_{1}w_0w_1\cdots w_m$ is the largest BP-factorization of $w$. Goto et al.~\cite{Goto&etal:2018} showed that $w_{i}$ is the shortest border of $w_{i}\cdots w_{1}w_0w_1\cdots w_i$ where $i\geq 1$. This means that we can compute the largest BP-factorization of $w$ by greedily ``peeling off" the shortest borders of central factors until you hit an unbordered word or the empty word.

The rest of the paper is structured as follows. In Section~\ref{section:countIB} we give a recurrence for the number of length-$n$ words with largest BP-factorization of width $t$. In Section~\ref{section:expIB} we show that the expected width of the largest BP-factorization of a length-$n$ word tends to a constant. In Section~\ref{section:smallIB} we consider \emph{smallest BP-factorizations} in the sense that one ``peels off" the longest non-overlapping border. We say a border $u$ of a word $w$ is \emph{non-overlapping} if $|u| \leq |w|/2$; otherwise $u$ is \emph{overlapping}. Finally, in Section~\ref{section:uniqueIB} we present some results on words with a unique border and show that they are connected to words whose smallest and largest BP-factorizations are the same.

\section{Counting largest BP-factorizations}\label{section:countIB}
In this section, we prove a recurrence for the number $\IB_k(n,t)$ of length-$n$ words over $\Sigma_k$ with largest BP-factorization of width $t$. See Table~\ref{table:IBTable} for sample values of $\IB_2(n,t)$ for small $n$, $t$. For the following theorem, recall the definition of $u_n$ from Equation~\ref{equation:unbordered}.

\begin{theorem}Let $n,t \geq 0$, and $k\geq 2$ be integers. Then
\[
\IB_k(n,t)=    \begin{cases} 
      \sum_{i=1}^{(n-t)/2+1} u_{i} \IB_k(n-2i, t-2), & \text{if $n$, $t$ even;} \\
      \sum_{i=1}^{(n-t+1)/2} u_{2i} \IB_k(n-2i,t-1), & \text{if $n$ even, $t$ odd;}\\
      0, & \text{if $n$ odd, $t$ even;} \\
      \sum_{i=1}^{(n-t)/2+1}u_{2i-1}\IB_k(n-2i+1,t-1), & \text{if $n$, $t$ odd.}\\
   \end{cases}
\]

where 
\begin{align}
\IB_k(0,0) &= 1,\nonumber \\
\IB_k(2n,2) &= u_n, \nonumber \\
\IB_k(n,1) &= u_n. \nonumber 
\end{align}
\end{theorem}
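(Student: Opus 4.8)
The plan is to prove the recurrence by analyzing how the largest BP-factorization of a length-$n$ word $w$ is built up, using the characterization due to Goto et al.\ that is quoted in the introduction: in the largest BP-factorization $w_m\cdots w_1 w_0 w_1\cdots w_m$, each outer block $w_i$ (for $i\ge 1$) is the \emph{shortest border} of the central factor $w_i\cdots w_1 w_0 w_1\cdots w_i$. Equivalently, the largest BP-factorization is obtained by repeatedly peeling off shortest borders from the center outward, stopping when one reaches an unbordered word (giving $w_0$ non-empty, odd width) or the empty word (giving $w_0$ empty, even width). The base cases are then immediate: $\IB_k(0,0)=1$ since the empty word has the empty factorization; $\IB_k(n,1)=u_n$ since width-$1$ factorizations are exactly the unbordered words $w=w_0$; and $\IB_k(2n,2)=u_n$ since a width-$2$ factorization $w_1 w_1$ forces $w_1$ to be unbordered of length $n$ (if $w_1$ had a border, the central word $w_1 w_1$ would have a shorter border, contradicting that we peeled the shortest one — alternatively, $\IB_k(2n,2)$ counts squares $uu$ with $u$ unbordered).

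For the recursive step, I would fix the outermost block $w_m$, say $|w_m|=i$. The crucial structural fact is that $w_m$ must be an unbordered word: it is the shortest border of the word $w_m\cdots w_1 w_0 w_1\cdots w_m$, and the shortest border of any bordered word is itself unbordered (a shorter border would induce an even shorter border of the whole). Conversely, given an unbordered word $u$ of the appropriate length and a word $w'$ of length $n-2i$ whose largest BP-factorization has width $t-2$ (when $t\ge 3$) or $t-1$ (when the peeled block becomes adjacent to an empty center, i.e.\ $t=2$), we want to glue $w = u\, w'\, u$ and argue its largest BP-factorization is exactly $u$ around the largest BP-factorization of $w'$. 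This requires checking that $u$ is genuinely the shortest border of $w$: since $u$ is unbordered, the only way $w=uw'u$ could have a border shorter than $u$ is if that short border is a prefix of $u$ and a suffix of $u$, i.e.\ a border of $u$ — impossible. One must also rule out borders of $w$ of length strictly between $|u|$ and $|w|/2$; but any such border $v$ would have $u$ as a proper prefix and proper suffix of $v$, hence $u$ would be a border of $v$; chasing this down, $v$'s own shortest border is unbordered and $\le$ the shortest border of $w$, and the greedy/peeling description forces the shortest border of $w$ to be exactly $u$. This is the step I expect to be the main obstacle: carefully showing the correspondence $w \leftrightarrow (u, w')$ is a bijection onto the right set, i.e.\ that peeling off $u$ from $w$ and recursing reproduces precisely the largest BP-factorization of $w'$ with two (or one) fewer blocks, with no "off-by-one" pathologies in width.

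Granting the bijection, the four cases fall out from parity bookkeeping on the lengths. When $t$ is even and $\ge 2$, the peeled block $w_m=u$ can have any length $i\ge 1$, the remaining word $w'$ has length $n-2i$ and width $t-2$; the constraint $n-2i \ge t-2$ gives $i \le (n-t)/2+1$, yielding $\sum_{i=1}^{(n-t)/2+1} u_i\,\IB_k(n-2i,t-2)$ — with the $t=2$ instance folding into the base case $\IB_k(n-2i,0)$, which is $1$ iff $n-2i=0$, recovering $\IB_k(2n,2)=u_n$. When $t$ is odd and $\ge 3$, peeling $w_m$ leaves a word of width $t-1$ (the central $w_0$ is non-empty throughout), so the sum runs over $u_i\,\IB_k(n-2i,t-1)$. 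The parity of $|u|$ is then pinned down: $w'$ must have a width-$(t-1)$ or width-$(t-2)$ largest BP-factorization, and such words have length of a fixed parity (even width forces even length, as the $n$ odd / $t$ even case shows $\IB_k=0$), which forces $|u|=i$ even in the "$n$ even, $t$ odd" case (write $i=2i'$, summing $u_{2i'}$) and $|u|$ odd in the "$n,t$ odd" case (write $i=2i'-1$, summing $u_{2i'-1}$), after reindexing. Finally, the case $n$ odd, $t$ even gives $0$ because an even-width BP-factorization $w_m\cdots w_1 w_1\cdots w_m$ has length $2\sum|w_i|$, which is even; this can also be seen inductively from the recurrence since every nonzero term on the right reduces to $\IB_k(0,0)$ through length-preserving parity, and $n$ odd never reaches it.
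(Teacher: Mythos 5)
Your base cases and your even--even case are fine, and the outer-peeling bijection $w \leftrightarrow (w_m, w')$ is legitimate (the paper instead removes the \emph{innermost} pair $w_1$ in that case, but either works). The genuine gap is in the two odd-width cases. Removing the outermost block $w_m$ always deletes \emph{two} non-empty blocks, so it reduces the width by $2$, never by $1$; your claim that for odd $t$ ``peeling $w_m$ leaves a word of width $t-1$'' is false, and what your decomposition actually yields is $\sum_i u_i \IB_k(n-2i,t-2)$ for \emph{all} $t\geq 2$. That is a correct identity, but it is not the recurrence in the statement: the stated odd-$t$ formulas $\sum_i u_{2i}\IB_k(n-2i,t-1)$ and $\sum_i u_{2i-1}\IB_k(n-2i+1,t-1)$ come from a different decomposition, namely deleting the \emph{central} block $w_0$ (one block, so width drops by $1$, and the length drops by $|w_0|$, whose parity is forced because the remaining word $w_m\cdots w_1w_1\cdots w_m$ has even length). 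This is what the paper does.

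Your attempt to force the outer-peeling into the stated form then produces two further concrete errors. First, the parity constraint you invoke applies to $|w_0|$, not to $|w_m|$: the outermost block can have either parity (e.g.\ $w=0010$ has $n=4$ even, largest BP-factorization $0\cdot 01\cdot 0$ of width $t=3$ odd, and $|w_m|=1$ is odd). Second, even granting your reindexing $i=2i'$, the remaining word would have length $n-2|w_m| = n-4i'$, not the $n-2i$ appearing in the statement, so the bookkeeping does not close. To prove the theorem as stated you need to switch decompositions for odd $t$: peel off $w_0$, observe that $|w'|=2|w_1\cdots w_m|$ is even, deduce the parity of $|w_0|$ from that of $n$, and sum over $|w_0|$.
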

\begin{proof}\sloppy
Let $w$ be a length-$n$ word whose largest BP-factorization $w_{m}\cdots w_{1}w_0w_1\cdots w_{m}$ is of width $t$. Clearly $\IB_k(0,0)=1$. We know that each block in a largest BP-factorization is unbordered, since each block is a shortest border of some central factor. This immediately implies $\IB_k(n,1)=u_n$ and $\IB_k(2n, 2)= u_n$.

Now we take care of the other cases.

\begin{itemize}
    \item Suppose $n$, $t$ are even. Then by removing both instances of $w_1$ from $w$, we get $w' = w_{m}\cdots w_{2}w_2\cdots w_m$, which is a length-$(n-2|w_1|)$ word whose largest BP-factorization is of width $t-2$. This mapping is clearly reversible, since all blocks in a largest BP-factorization are unbordered, including $w_1$. Thus summing over all possible $w_1$ and all length-$(n-2|w_1|)$ words with largest BP-factorization of width $t-2$ we have \[\IB_k(n,t) = \sum_{i=1}^{(n-t)/2+1} u_{i} \IB_k(n-2i, t-2).\]
    \item Suppose $n$ is even and $t$ is odd. Then by removing $w_0$ from $w$, we get $w' = w_{m}\cdots w_{1}w_1\cdots w_m$, which is a length-$(n-|w_0|)$ word whose largest BP-factorization is of width $t-1$. This mapping is reversible for the same reason as in the previous case. The word $w'$ is of even length since $|w'| = 2|w_1\cdots w_m|$.  Since $n$ is even and $|w'|$ is even, we must have that $|w_0|$ is even as well. Thus summing over all possible $w_0$ and all length-$(n-|w_0|)$ words with largest BP-factorization of width $t-1$, we have
    \[\IB_k(n,t) = \sum_{i=1}^{(n-t+1)/2} u_{2i} \IB_k(n-2i,t-1).\]
    \item Suppose $n$ is odd and $t$ is even.
    Then the length of $w$ is $2|w_1\cdots w_m|$, which is even, a contradiction. Thus $\IB_k(n,t)=0$.
    \item Suppose $n$, $t$ are odd. Then by removing $w_0$ from $w$, we get $w' = w_{m}\cdots w_{1}w_1\cdots w_m$, which is a length-$(n-|w_0|)$ word whose largest BP-factorization is of width $t-1$. This mapping is reversible for the same reasons as in the previous cases. Since $n$ is odd and $|w'|$ is even (proved in the previous case), we must have that $|w_0|$ is odd. Thus summing over all possible $w_0$ and all length-$(n-|w_0|)$ words with largest BP-factorization of width $t-1$, we have
    \[\sum_{i=1}^{(n-t)/2+1}u_{2i-1}\IB_k(n-2i+1,t-1).\]
\end{itemize}
\end{proof}

\begin{table}[H]

\centering
\begin{tabular}{|c|cccccccccc|}
\hline
\backslashbox{$n$}{$t$}  & $1$ & $2$ & $3$ & $4$ & $5$ & $6$ & $7$ & $8$ & $9$ & $10$  \\
\hline
10 & 284 & 12 & 224 & 40 & 168 & 72 & 96 & 64 & 32 & 32\\ 
11 & 568 & 0 & 472 & 0 & 416 & 0 & 336 & 0 & 192 & 0\\ 
12 & 1116 & 20 & 856 & 88 & 656 & 176 & 448 & 224 & 224 & 160\\ 
13 & 2232 & 0 & 1752 & 0 & 1488 & 0 & 1248 & 0 & 896 & 0\\ 
14 & 4424 & 40 & 3328 & 176 & 2544 & 432 & 1856 & 640 & 1152 & 640\\ 
15 & 8848 & 0 & 6736 & 0 & 5440 & 0 & 4576 & 0 & 3584 & 0\\ 
16 & 17622 & 74 & 13100 & 372 & 9896 & 984 & 7408 & 1744 & 5088 & 2080\\ 
17 & 35244 & 0 & 26348 & 0 & 20536 & 0 & 16784 & 0 & 13664 & 0\\ 
18 & 70340 & 148 & 51936 & 760 & 38824 & 2248 & 29152 & 4416 & 21088 & 6240\\ 
19 & 140680 & 0 & 104168 & 0 & 79168 & 0 & 62800 & 0 & 51008 & 0\\ 
20 & 281076 & 284 & 206744 & 1592 & 153344 & 4992 & 114688 & 10912 & 84704 & 17312\\ 
\hline
\end{tabular}
\captionsetup{justification=centering}
\caption{Some values of $\IB_2(n,t)$ for $n$, $t$ where $10 \leq n \leq 20$ and $1\leq t \leq 10$.}
\label{table:IBTable}
\end{table}

\section{Expected width of largest BP-factorization}\label{section:expIB}
In this section, we show that the expected width $E_{n,k}$ of the largest BP-factorization of a length-$n$ word over $\Sigma_k$ is bounded by a constant. From the definition of expected value, it follows that \[ E_{n,k}=\frac{1}{k^n}\sum_{i=1}^n i\cdot \IB_k(n,i).\]Table~\ref{table:IBExpTable} shows the behaviour of $\lim\limits_{n\to \infty}E_{n,k}$ as $k$ increases.

\begin{lemma}\label{lemma:IBconv}
Let $k\geq 2$ and $n\geq t \geq 1$ be integers. Then \[ \frac{\IB_k(n,t)}{k^n}\leq\frac{1}{k^{t/2-1}}.\]
\end{lemma}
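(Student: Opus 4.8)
The plan is to prove the bound $\IB_k(n,t)/k^n \le k^{-(t/2-1)}$ by induction on $t$, using the recurrence from the preceding theorem together with the classical fact that $\sum_{i\ge 1} u_i/k^i$ and related partial sums are controlled. First I would dispose of the base cases: for $t=1$ we have $\IB_k(n,1)=u_n\le k^n$, so $\IB_k(n,1)/k^n \le 1 = k^{-(1/2-1)}\cdot k^{-1/2}\le 1$, which is fine since $k^{-(1/2-1)}=k^{1/2}\ge 1$; for $t=2$ we have $\IB_k(2m,2)=u_m$, so $\IB_k(n,2)/k^n = u_{n/2}/k^n \le k^{n/2}/k^n = k^{-n/2}\le k^{-1}$ when $n\ge 2$, matching $k^{-(2/2-1)}=k^0=1$ — wait, this needs care, so let me instead aim the induction so that each step down in $t$ by $1$ or $2$ gains the right power of $k$.

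The key inductive step uses the unbordered-word count fact that $u_j \le k^j$ trivially, but more importantly that $\sum_{j\ge 1} u_j / k^j \le$ some constant; in fact I would use the sharper elementary bound $\sum_{i=1}^{N} u_i/k^i \le k/(k-1)$ or even that the relevant sums telescope nicely. Concretely, in the case $n,t$ both even, the recurrence gives
\[
\frac{\IB_k(n,t)}{k^n} = \sum_{i=1}^{(n-t)/2+1} \frac{u_i}{k^{2i}}\cdot\frac{\IB_k(n-2i,t-2)}{k^{n-2i}} \le \frac{1}{k^{(t-2)/2-1}}\sum_{i\ge 1}\frac{u_i}{k^{2i}},
\]
using the inductive hypothesis on the second factor; I then need $\sum_{i\ge 1} u_i/k^{2i} \le 1/k$, which should follow because $u_i\le k^{i-1}$ for $i\ge 1$ (every unbordered word of length $\ge 1$ has its first letter determining... actually $u_1=k$, so one needs $u_i \le k^{i}$ and the geometric series $\sum_{i\ge1} k^i/k^{2i}=\sum k^{-i}=1/(k-1)\le 1/k$ fails for $k=2$) — so here is where I must be more careful and use a genuinely better bound on $u_i$, likely $u_i\le k^i - k^{\lceil i/2\rceil}$ type estimates, or simply $\sum_{i\ge1}u_i/k^{2i} \le \sum_{i\ge1} k^{-i} = 1/(k-1)$ and then absorb the constant by noting $1/(k-1)\le 1$, adjusting the claimed exponent bookkeeping. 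The analogous computation handles the $n$ even / $t$ odd case (sum over $u_{2i}/k^{2i}$ against $\IB_k(n-2i,t-1)/k^{n-2i}$, gaining one factor $k^{-1/2}$ worth) and the $n,t$ both odd case similarly, while the $n$ odd / $t$ even case is trivial since $\IB_k(n,t)=0$.

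The main obstacle I anticipate is getting the constant in $\sum u_i/k^{\ell i}$ to come out at most $1$ (or $k^{-1/2}$ in the half-step cases) so that the exponent in the bound improves by exactly the right amount at each recursion step rather than accumulating a stray multiplicative constant; this is a genuine but elementary estimate on the generating function $\sum u_i x^i$, for which I would invoke the recurrence \eqref{equation:unbordered} to show $\sum_{i\ge1} u_i/k^{2i}\le 1/k$ and $\sum_{i\ge1} u_{2i}/k^{2i}, \sum_{i\ge1}u_{2i-1}/k^{2i-1}$ are each at most $1$ and $1/\sqrt{k}$-ish respectively. Once those numeric inequalities are in hand, the induction closes immediately in all four cases. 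A secondary point to check is that the upper limits of summation in the recurrence (finite) only make the sums smaller than the infinite tails, so replacing them by $\infty$ is safe.
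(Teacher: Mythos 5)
There is a genuine gap here, and in fact the approach itself cannot be repaired by sharper estimates. Your induction closes only if the numeric inequalities you defer to the end actually hold, and they do not. In the even--even case you need $\sum_{i\geq 1} u_i/k^{2i}\leq 1/k$, but the $i=1$ term alone is $u_1/k^2 = k/k^2 = 1/k$, and every further term is positive, so the sum strictly exceeds $1/k$; the trivial bound $u_i\leq k^i$ only gives $1/(k-1)$, and the loss factor $k/(k-1)>1$ compounds over the $\Theta(t)$ levels of the recursion, so no bookkeeping adjustment by a fixed constant saves it. The situation in the $n$ even, $t$ odd case is worse: the sum $\sum_{i=1}^{(n-t+1)/2} u_{2i}/k^{2i}$, which you hope to bound by a constant near $1$, actually grows linearly in the number of summands, because $u_j/k^j$ tends to a \emph{positive} constant (the density of unbordered words does not vanish). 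For example, with $k=2$ and $n=20$, $t=3$, each summand is at least about $0.27$ and there are nine of them. Since the lemma only has a fixed power of $k$ of slack, while termwise substitution of the inductive hypothesis into the recurrence loses an unbounded factor, this route is structurally blocked, not merely incomplete; the lemma is true, but not ``self-improving'' through the recurrence in the way your induction requires.

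The paper's proof avoids the recurrence entirely and is a one-line direct count: if $w_m\cdots w_1 w_0 w_1\cdots w_m$ is the largest BP-factorization of a word of width $t$, then each of the $m$ outer blocks is non-empty and occurs twice, so the word is determined by at most $n-m$ symbols; since $t\leq 2m+1$ gives $m\geq t/2-1$, this yields $\IB_k(n,t)\leq k^{n-m}\leq k^{n-t/2+1}$ and hence the claim. If you want to keep an inductive argument, you would have to strengthen the hypothesis to something that tracks the total length of the removed blocks (essentially re-deriving the $k^{n-m}$ count), at which point the direct argument is both shorter and cleaner.
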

\begin{proof}Let $w$ be a length-$n$ word whose largest BP-factorization $w_{m}\cdots w_{1}w_0w_1\cdots w_{m}$ is of width $t$. Since $w_i$ is non-empty for every $1\leq i \leq m$, we have that $\IB_k(n,t) \leq k^{n-m} \leq k^{n-t/2+1}$. So \[\frac{\IB_k(n,t)}{k^n}\leq \frac{1}{k^{t/2-1}}\] for all $n\geq t\geq 1$.
\end{proof}
\begin{theorem}
The limit $E_k=\lim\limits_{n\to \infty} E_{n,k}$ exists for all $k\geq 2$.
\end{theorem}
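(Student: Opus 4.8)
The plan is to show that the sequence $E_{n,k}$ converges by exhibiting it as (essentially) a convergent series and invoking dominated convergence for series, using Lemma~\ref{lemma:IBconv} as the uniform bound. First I would write, using the recurrence structure or directly, a decomposition of $E_{n,k}$ in terms of the quantities $p_{n,k}(t) := \IB_k(n,t)/k^n$, so that $E_{n,k} = \sum_{t=1}^{n} t\, p_{n,k}(t)$. The key observation is that $p_{n,k}(t) \le k^{-(t/2-1)}$ for all $n \ge t \ge 1$ by Lemma~\ref{lemma:IBconv}, and that $\sum_{t\ge 1} t\, k^{-(t/2-1)}$ converges (it is, up to the constant $k$, a derivative-of-geometric series in $k^{-1/2} < 1$). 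Hence the partial sums $E_{n,k}$ are bounded uniformly in $n$, and more importantly the tails $\sum_{t > T} t\, p_{n,k}(t)$ are bounded by $\sum_{t>T} t\, k^{-(t/2-1)}$, which is independent of $n$ and tends to $0$ as $T \to \infty$.

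The second ingredient is that for each fixed $t$, the limit $\lim_{n\to\infty} p_{n,k}(t)$ exists. I would establish this by induction on $t$ using the recurrence from the counting theorem together with the known asymptotics of $u_n$: it is classical (and follows from Equation~\ref{equation:unbordered}) that $u_n/k^n$ converges to a constant $c_k$ as $n\to\infty$ (with $c_k > 0$). For the base cases, $\IB_k(n,1)/k^n = u_n/k^n \to c_k$ and $\IB_k(2n,2)/k^{2n} = u_n/k^{2n} \to 0$; for even arguments $\IB_k(2n,2)/k^{2n}$, note this is $u_n / k^{2n}$ which tends to $0$. For the inductive step, dividing the recurrence by $k^n$ turns each term $u_i \IB_k(n-2i,t-2)/k^n$ into $(u_i/k^i)\cdot(\IB_k(n-2i,t-2)/k^{n-2i})$, a product of a convergent (indeed summable in $i$) sequence with something converging by the induction hypothesis; a second application of dominated convergence for series — again with Lemma~\ref{lemma:IBconv} providing the $n$-uniform domination — shows the sum converges as $n\to\infty$. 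The odd-$t$ and mixed-parity cases are handled the same way.

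Combining the two ingredients gives the result: fix $\varepsilon > 0$, choose $T$ so that $\sum_{t > T} t\, k^{-(t/2-1)} < \varepsilon/3$; then for $n, n'$ large enough that $|p_{n,k}(t) - p_{n',k}(t)| < \varepsilon/(3tT)$ for all $t \le T$ (possible since each of the finitely many sequences $p_{\cdot,k}(t)$ converges), we get $|E_{n,k} - E_{n',k}| < \varepsilon$, so $(E_{n,k})_n$ is Cauchy and hence converges. Equivalently, one can phrase the whole argument as: $E_k = \sum_{t \ge 1} t \lim_{n\to\infty} p_{n,k}(t)$, with the interchange of limit and sum justified by the uniform bound.

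The main obstacle I anticipate is not any single hard estimate but rather the bookkeeping of the two nested limit interchanges — the outer one over $t$ and the inner one over the summation index $i$ in the recurrence — and making sure the domination in each case is genuinely uniform in $n$. In particular one must be slightly careful that the upper limits of summation in the recurrence (things like $(n-t)/2+1$) grow with $n$, so the sums are genuinely infinite in the limit and a convergence theorem, not just continuity of a finite sum, is needed; Lemma~\ref{lemma:IBconv} is exactly what makes this go through cleanly.
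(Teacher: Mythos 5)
Your proposal is correct and follows essentially the same route as the paper: the paper's proof is a one-line appeal to the definition of $E_{n,k}$, Lemma~\ref{lemma:IBconv}, and the direct comparison test, i.e., exactly the domination argument you spell out. In fact your write-up supplies an ingredient the paper leaves implicit—the termwise convergence of $\IB_k(n,t)/k^n$ for each fixed $t$, which is genuinely needed since the summands of $E_{n,k}$ themselves depend on $n$—and the only quibble is the harmless slip of writing $u_i/k^{i}$ where the factor appearing after dividing the recurrence by $k^n$ is $u_i/k^{2i}$ (which is indeed the summable-in-$i$ quantity your domination step requires).
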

\begin{proof}
Follows from the definition of $E_{n,k}$, Lemma~\ref{lemma:IBconv}, and the direct comparison test for convergence.
\end{proof}
\noindent 
Interpreting $E_k$ as a power series in $k^{-1}$, we empirically observe that $E_k$ is approximately equal to
\[1 + \frac{2}{k} + \frac{4}{k^2}+\frac{6}{k^3} + \frac{10}{k^4}+\frac{16}{k^5} + \frac{24}{k^6} + \frac{38}{k^7}+\frac{58}{k^8}+\frac{88}{k^9}+\cdots.\]
We conjecture the following about $E_k$.
\begin{conjecture}
Let $k\geq 2$. Then
\[E_k = 1+ \sum_{i=1}^\infty a_i k^{-i}\]
where the sequence $(a_n/2)_{n\geq 1}$ is \href{https://oeis.org/A274199}{\underline{A274199}} in the \emph{On-Line Encyclopedia of Integer Sequences} (OEIS)~\cite{OEIS}.
\end{conjecture}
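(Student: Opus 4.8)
The plan is to reduce the conjecture to an explicit power-series identity for $1/\beta_k$, where $\beta_k:=\lim_{n\to\infty}u_n/k^n$ is the asymptotic density of unbordered words over $\Sigma_k$. First I would pass from the recurrence for $\IB_k(n,t)$ to generating functions: writing $U(x):=\sum_{n\ge1}u_nx^n$ and feeding the four cases of the recurrence into $g_t(x):=\sum_{n\ge0}\IB_k(n,t)x^n$ (the ``$n$ odd, $t$ even'' case being vacuous, which makes the parity bookkeeping collapse) one obtains $g_{2m}(x)=U(x^2)^m$ and $g_{2m+1}(x)=U(x)\,U(x^2)^m$; equivalently $\IB_k(n,2m+1)=\sum u_{a_0}u_{a_1}\cdots u_{a_m}$, the sum over all $a_0,\dots,a_m\ge1$ with $a_0+2(a_1+\cdots+a_m)=n$, and $\IB_k(n,2m)$ the analogous sum without the $u_{a_0}$ factor.

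Next, the telescoped form $u_N/k^N=1-\sum_{m=1}^{\lfloor N/2\rfloor}u_mk^{-2m}$ of \eqref{equation:unbordered} gives $\beta_k=1-\sum_{a\ge1}u_ak^{-2a}$. From the explicit sums, $c_k(2m):=\lim_n\IB_k(n,2m)/k^n=0$ (the coefficients of the radius-$k$ series $\sum_a u_ak^{-2a}y^a$ tend to $0$), while $c_k(2m+1):=\lim_n\IB_k(n,2m+1)/k^n=\beta_k(1-\beta_k)^m$ by dominated convergence --- the factors $u_{a_i}k^{-2a_i}$ are summable to $1-\beta_k$, and $u_{a_0}k^{-a_0}\to\beta_k$ as $a_0\to\infty$. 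Since Lemma~\ref{lemma:IBconv} lets one interchange $\lim_n$ with $\sum_t$ in $E_{n,k}$,
\[
E_k=\sum_{t\ge1}t\,c_k(t)=\sum_{m\ge0}(2m+1)\,\beta_k(1-\beta_k)^m=\beta_k\cdot\frac{1+(1-\beta_k)}{\bigl(1-(1-\beta_k)\bigr)^2}=\frac{2-\beta_k}{\beta_k}.
\]
(Alternatively this can be done by singularity analysis of $\sum_t t\,g_t(x)$, whose unique dominant singularity is the simple pole of $U$ at $x=1/k$.)

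Since $E_k=1+2\bigl(\tfrac1{\beta_k}-1\bigr)$, writing $E_k=1+\sum_{i\ge1}a_ik^{-i}$ forces $a_i=2b_i$ where $1/\beta_k=1+\sum_{i\ge1}b_ik^{-i}$; thus the conjecture becomes the claim that $(b_i)_{i\ge1}$ is A274199. To make $\beta_k$ explicit as a formal series in $x:=k^{-1}$, iterate the functional equation $U(x)(1-kx)=kx-U(x^2)$, which is immediate from \eqref{equation:unbordered}: with $A_r:=U(x^{2^r})$ one has $A_r=\dfrac{x^{2^r-1}-A_{r+1}}{1-x^{2^r-1}}$ for $r\ge1$ and $A_r\to0$, hence
\[
\beta_k=1-A_1=1-\sum_{r\ge1}\frac{(-1)^{r-1}\,x^{2^r-1}}{\prod_{i=1}^{r}\bigl(1-x^{2^i-1}\bigr)},
\]
a Mahler-type series that completely determines the $b_i$.

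The remaining step --- which I expect to be the main obstacle --- is to identify $1/\beta_k$ with the generating function $1+\sum_{i\ge1}b_ix^i$ of A274199. If the OEIS entry records a generating function of this same shape, or a recurrence matching the one the $u_n$ impose on the coefficients of $1/\beta_k$, the identification is a short verification; otherwise one would need a bijective argument, or a generating-function manipulation showing that $\beta_k$ and the reciprocal of A274199's generating function satisfy the same functional equation with matching initial terms. Everything preceding this step --- the generating-function identities for the $g_t$, the evaluation of the $c_k(t)$, and the reduction to $1/\beta_k$ --- is routine.
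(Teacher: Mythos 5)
The statement you are addressing is labelled a \emph{conjecture} in the paper: the authors give no proof at all, only the empirical expansion $1+\frac{2}{k}+\frac{4}{k^2}+\cdots$, so there is no argument of theirs to compare yours against. Judged on its own terms, your proposal contains a correct and genuinely valuable reduction, but it does not prove the conjecture. The positive part checks out: with $U(x)=\sum_{n\ge 1}u_nx^n$ and $\beta_k=\lim_n u_n/k^n$, the recurrence for $\IB_k(n,t)$ does yield $g_{2m}(x)=U(x^2)^m$ and $g_{2m+1}(x)=U(x)\,U(x^2)^m$; telescoping Equation~\ref{equation:unbordered} gives $\beta_k=1-U(k^{-2})$; your dominated-convergence evaluation of the limits $c_k(t)$ and the interchange of $\lim_n$ with $\sum_t$ (legitimized by Lemma~\ref{lemma:IBconv}) are sound; and the resulting closed form $E_k=(2-\beta_k)/\beta_k=2/\beta_k-1$ is consistent with Table~\ref{table:IBExpTable} (for instance $\beta_2\approx 0.267787$ gives $E_2\approx 6.4686$ and $\beta_3\approx 0.55698$ gives $E_3\approx 2.5908$). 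Expanding $1/\beta_k$ formally in $k^{-1}$ (each $u_n$ is a polynomial in $k$, so this is well defined) does reproduce $1,2,3,5,8,12,\dots$ for the first several coefficients, so the reduction is at least consistent with the conjecture.

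The gap is exactly where you locate it, and unfortunately it is the entire content of the conjecture: one must prove that the coefficient sequence of $1/\beta_k-1$ agrees with A274199 for \emph{all} indices, not just the finitely many terms one can check. Your Mahler-type expansion of $\beta_k$ obtained by iterating $U(x)(1-kx)=kx-U(x^2)$ is a correct explicit description of the relevant series, but nothing in the proposal connects that series to whatever A274199 actually counts; the sentence ``if the OEIS entry records a generating function of this same shape\dots the identification is a short verification; otherwise one would need a bijective argument'' is a restatement of the open problem, not a resolution of it. As written, what you have actually proved is the identity $E_k=2/\beta_k-1$ --- a clean result that goes beyond anything in the paper and to which the conjecture should be reformulated as an addendum --- while the conjecture itself remains unproven. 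I would recommend presenting your reduction as a theorem and then stating the remaining claim purely as an assertion about the power-series expansion of $1/\beta_k$.
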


\begin{table}[H]
\centering
\begin{tabular}{|c|c|}
\hline
$k$ & $\approx E_k$ \\
\hline
2 & 6.4686 \\ 
3 & 2.5908 \\ 
4 & 1.9080  \\ 
5 & 1.6314 \\ 
6 & 1.4827 \\ 
7 & 1.3902 \\ 
8 & 1.3272 \\ 
9 & 1.2817 \\ 
10 &1.2472 \\ 
\vdots & \vdots \\
100 & 1.0204\\
\hline
\end{tabular}
\captionsetup{justification=centering}
\caption{Asymptotic expected width of a word's largest BP-factorization.}
\label{table:IBExpTable}
\end{table}
Cording et al.~\cite{Cording&Gagie&Knudsen&Kociumaka:2021} proved that the expected length of the longest unbordered factor in a word is $\Theta(n)$. Taking this into account, it is not surprising that the expected length of the largest BP-factorization of a word tends to a constant.

\section{Smallest BP-factorization}\label{section:smallIB}
A word $w$, seen as a block, clearly satisfies the definition of a BP-factorization. Thus, taken literally, the smallest BP-factorization for all words is of width $1$. But this is not very interesting, so we consider a different definition instead.  A border $u$ of a word $w$ is \emph{non-overlapping} if $|u| \leq |w|/2$; otherwise $u$ is \emph{overlapping}. We say that the \emph{smallest BP-factorization} of a word $w$ is a BP-factorization $w=w_{m}\cdots w_{1}w_0w_1\cdots w_m$ where each $w_i$ is the longest non-overlapping border of $w_{i}\cdots w_{1}w_0w_1\cdots w_{i}$, except $w_0$, which is either empty or unbordered. For example, going back to Example~\ref{example:abra}, the smallest BP-factorization of ${\tt abracadabra}$ is ${\tt abra}\cdot {\tt cad} \cdot {\tt abra}$ and the smallest BP-factorization of ${\tt reappear}$ is ${\tt r}\cdot {\tt ea} \cdot {\tt p}\cdot {\tt p} \cdot {\tt ea} \cdot {\tt r}$.

A natural question to ask is: what is the maximum possible width $f_k(n)$ of the smallest BP-factorization of a length-$n$ word? Through empirical observation, we arrive at the following conjectures: 
\begin{itemize}
    \item We have $f_2(8n+i)=6n+i$ for $i$ with $0\leq i \leq 5$ and $f_2(8n+6) = f_2(8n+7) = 6n+5$. 
    \item We have $f_k(n) = n$ for $k\geq 3$.
\end{itemize}
To calculate $f_k(n)$, two things are needed: an upper bound on $f_k(n)$, and words that witness the upper bound. 
\begin{theorem}
Let $l\geq 0$ be an integer. Then $f_2(8l+i) = 6l + i$ for $i$ with $0\leq i \leq 5$ and $f_2(8l+6)=f_2(8l+7)= 6l + 5$.
\end{theorem}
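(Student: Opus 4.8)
The plan is to prove matching upper and lower bounds for $f_2(n)$, handling the two directions separately and then combining them according to the residue of $n$ modulo $8$.

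\textbf{Upper bound.} First I would establish that the width of any smallest BP-factorization of a length-$n$ binary word cannot exceed the claimed value. The key structural fact is that in a smallest BP-factorization $w = w_m \cdots w_1 w_0 w_1 \cdots w_m$, each $w_i$ (for $i \geq 1$) is the \emph{longest} non-overlapping border of the central factor $c_i := w_i \cdots w_1 w_0 w_1 \cdots w_i$, so $|w_i| \le |c_i|/2$, and moreover $c_{i-1} = w_{i-1}\cdots w_1 w_0 w_1 \cdots w_{i-1}$ is exactly $c_i$ with one copy of $w_i$ stripped from each end. The constraint comes from the fact that peeling the \emph{longest} non-overlapping border forces $w_i$ to be reasonably long relative to $c_i$ whenever $c_i$ is bordered: if $c_i$ has a border at all, the longest non-overlapping one has length at least $|c_i|/3$ would be too strong, so instead I would analyze the binary case directly. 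Over a binary alphabet, a word of length $\ge 4$ whose longest non-overlapping border has length $1$ must look like $0u0$ or $1u1$ with $u$ containing no matching prefix/suffix of length $2$ or $3$ (up to the non-overlap bound); tracking how often width can increase by $1$ (a single-letter block $w_i$) versus being forced to take a longer block, one finds that at most $6$ out of every $8$ units of length can be converted into blocks, with small corrections for the residue. Concretely, I would show that a block of length $1$ cannot be peeled more than a bounded number of times in a row before a longer block is forced, and a careful amortized/charging argument over consecutive blocks yields the $6l + i$ bound; the residues $6$ and $7$ collapsing to $6l+5$ come from the fact that the last few letters cannot form their own blocks once the central factor becomes too short.

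\textbf{Lower bound.} For the matching construction, I would exhibit, for each $n$, an explicit binary word of length $n$ whose smallest BP-factorization has width exactly $f_2(n)$. The natural candidate is a periodic-ish word built from a short unbordered "spacer" pattern whose self-overlap structure forces the greedy longest-non-overlapping-border peel to remove exactly the intended blocks. A good building block is something like repeated copies of $001$ or $0011$ arranged symmetrically around a center, so that at each stage the longest non-overlapping border is short (length $1$ or $2$) and the peeling proceeds for the maximal number of steps; one would verify by induction on the number of peels that each central factor $c_i$ indeed has the claimed longest non-overlapping border. I would then check the boundary residues $i = 0,\dots,5$ and $i=6,7$ by padding the construction appropriately near the center.

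\textbf{Main obstacle.} The hard part will be the upper bound, specifically proving rigorously that one cannot do better than $6$ blocks per $8$ letters in the binary case. This requires understanding exactly when a binary word's longest non-overlapping border has length $1$ versus length $\ge 2$, and showing that length-$1$ peels cannot be chained indefinitely — a binary word $0u0$ whose longest non-overlapping border is the single letter $0$ still imposes that $u$ avoids certain short borders, and after peeling, the new central factor $0'u'0'$ (where $u = 0 u' 0$) may now have a longer border, forcing the next peel to "waste" length. Quantifying this waste precisely, and matching it to the $8$-periodic pattern with the exact constants (including why residues $6,7$ both give $6l+5$ rather than $6l+6$ and $6l+7$), is the delicate combinatorial core; I expect this to go through via a case analysis on the last $8$ or so symbols of each central factor together with an induction that maintains an invariant on the length and border structure of the current central factor. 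The lower bound, by contrast, should be routine once the right periodic witness word is identified.
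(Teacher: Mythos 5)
Your plan has the right overall shape---matching upper and lower bounds built around a rate of six blocks per eight letters---but both halves defer exactly the content that constitutes the proof. For the upper bound, you assert that ``at most $6$ out of every $8$ units of length can be converted into blocks'' and propose an amortized/charging argument, but you never establish the charge; you explicitly label it the ``delicate combinatorial core'' and say you \emph{expect} a case analysis to work. The paper reduces this to one concrete, checkable lemma: in any smallest BP-factorization of a binary word, three consecutive outer blocks $w_{m-2}w_{m-1}w_m$ have total length at least $4$, which follows from the finite verification that no binary word of length less than $8$ admits a smallest BP-factorization of width $6$. One then repeatedly peels three blocks (at least four letters) from each side until the central core has length less than $8$, and the base values $f_2(0),\dots,f_2(7)$ are computed by exhaustive search; this is also where the collapse of residues $6$ and $7$ to $6l+5$ comes from, via $f_2(5)=f_2(6)=f_2(7)=5$. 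Without some lemma of this kind your upper bound remains an assertion.

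The lower bound is where your proposal is not merely incomplete but actually off track. You call it ``routine once the right periodic witness word is identified,'' but identifying the witness is the nontrivial part, and the candidates you name fail: the smallest BP-factorization of $001100$ is $00\cdot 1\cdot 1\cdot 00$ (width $4$ from length $6$) and that of $00111100$ is $00\cdot 11\cdot 11\cdot 00$ (width $4$ from length $8$), both strictly below the required rate. The paper's witness is $(0101)^{l}(1001)^{l}$, for which the greedy peel of each length-$8$ layer produces blocks of lengths $2,1,1$ on each side (the core $01011001$ factors as $01\cdot 0\cdot 1\cdot 1\cdot 0\cdot 01$, width $6$), together with explicit middle insertions $0$, $00$, $010$, $0110$, $01010$, $010110$, $0110110$ handling the residues $1,\dots,7$. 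These witnesses must be exhibited and verified; they cannot be assumed to exist, and your $001$/$0011$-based guesses show that a plausible-looking periodic pattern need not achieve the bound.
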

\begin{proof}
Let $n\geq 0$ be an integer. We start by proving lower bounds on $f_2(n)$. Suppose $n=8l$ for some $l\geq 0$. Then the width of the smallest BP-factorization of \[(0101)^{l}(1001)^{l}\] is $6l$, so $f_2(8l)\geq 6l$. To see this, notice that the smallest BP-factorization of $01011001$ is $01 \cdot 0 \cdot 1 \cdot 1 \cdot 0 \cdot 01$, and therefore is of width $6$. Suppose $n = 8l+i$ for some $i$ with $1\leq i \leq 7$. Then one can take $(0101)^{l}(1001)^{l}$ and insert either $0$, $00$, $010$, $0110$, $01010$, $010110$, or $0110110$ to the middle of the word to get the desired length.

Now we prove upper bounds on $f_2(n)$. Let $t\leq n$ be a positive integer. Let $w$ be a length-$n$ word whose largest BP-factorization $w_{m}\cdots w_{1}w_0w_1\cdots w_{m}$ is of width $t$. One can readily verify that $f_2(0)=0$, $f_2(1)=1$, $f_2(2)=2$, $f_2(3)=3$, $f_2(4) = 4$, and $f_2(5)=f_2(6)=f_2(7)=5$ through exhaustive search of all binary words of length $<8$. Suppose $m\geq 4$, so $n\geq t \geq 8$. Then we can write $w= w_{m} w_{m-1}w_{m-2}\cdots w_{m-2}w_{m-1}w_m$ where $|w_{m-2}|,|w_{m-1}|,|w_m|>0$. It is easy to show that $|w_{m-2}w_{m-1}w_m| \geq 4$ by checking that all binary words of length $<8$ do not admit a smallest BP-factorization of width $6$. In the worst case, we can peel off prefixes and suffixes of length $4$ while accounting for the $6$ blocks they add to the BP-factorization until we hit the middle core of length $<8$. Thus, we have $f_2(8l+i) \leq 6l + j$ where $j$ is the width of the smallest BP-factorization of the middle core, which is of length $i$. We have already computed $f_2(i)$ for $0\leq i \leq 7$, so the upper bounds follow.
\end{proof}

\begin{theorem}
Let $n\geq 0$ and $k\geq 3$ be integers. Then $f_k(n) = n$.
\end{theorem}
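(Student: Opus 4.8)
The plan is to show $f_k(n)=n$ for $k\geq 3$ by exhibiting, for each $n$, a length-$n$ word over $\Sigma_k$ whose smallest BP-factorization has all blocks of length $1$, i.e.\ width exactly $n$. Since the width of any BP-factorization is at most $n$, and the smallest BP-factorization peels off the \emph{longest} non-overlapping border at each stage, it suffices to construct a word $w$ such that every central factor $w_i\cdots w_1 w_0 w_1 \cdots w_i$ arising in the peeling process has longest non-overlapping border of length $1$ (and the core $w_0$ is a single letter or empty). Equivalently, I want a palindrome-structured word where the greedy ``longest non-overlapping border'' peeling strips only one letter from each side at every step.

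The natural construction, given that we have at least $3$ letters available, is to take a word of the form $a_m a_{m-1} \cdots a_1 \, c \, a_1 \cdots a_{m-1} a_m$ (or without the central $c$ in the even case), where the letters $a_i$ are chosen so that no longer border can ever appear. A clean choice is to use a \emph{squarefree} or at least carefully patterned word over $\Sigma_3$ as the left half. First I would fix notation for the candidate word $w^{(n)}$ and describe its left half explicitly; then, for each prefix-length one peels down to, I would argue that the longest non-overlapping border of the current central factor $v$ has length exactly $1$. A border of length $\ell$ of $v$ forces the first $\ell$ letters of $v$ to equal the last $\ell$ letters; because $v$ is itself a palindrome (it is a central factor of a palindrome-shaped word), a border of length $\ell\le |v|/2$ would force a repetition in the left half of $w^{(n)}$ near its boundary, and the construction is designed precisely to forbid that. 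The key lemma to isolate and prove is: \emph{for the chosen left-half word $h$, and for every $j$ with $1\le j\le |h|$, the length-$j$ prefix of $h$ read together with its palindromic completion has no non-overlapping border of length $>1$}; this reduces to a statement about the absence of certain short repetitions in $h$, which is where having $k\ge 3$ (hence access to squarefree words, by Thue) does the work that is impossible over a binary alphabet.

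The main steps, in order: (1) reduce $f_k(n)=n$ to constructing one good word per length $n$, using $f_k(n)\le n$ trivially; (2) define the candidate word via an explicit squarefree (or suitably repetition-free) half-word over $\{0,1,2\}$, handling the parity of $n$ by inserting or omitting a central letter; (3) prove the border lemma, showing the greedy longest non-overlapping border at each peeling stage has length $1$ — this is the heart of the argument; (4) conclude that the smallest BP-factorization has $m$ steps each contributing two blocks of length $1$ plus a core of length $0$ or $1$, for total width $n$. I expect step (3) to be the main obstacle: one must rule out \emph{all} non-overlapping borders of length between $2$ and $|v|/2$ of every central factor, not just borders of the full word, and the palindromic structure means a border corresponds to an internal repetition that must be excluded by the choice of half-word; getting the squarefree half-word to simultaneously kill borders at every peeling depth, and dealing carefully with the boundary case where a border has length exactly $|v|/2$, will require the most attention. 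A possible subtlety is that squarefreeness of the half alone may not immediately forbid a length-$2$ border (which needs the first two and last two letters to agree without creating a square), so I may instead use a half-word avoiding both squares and the specific ``mirror overlap'' patterns, or verify the small cases directly and induct.
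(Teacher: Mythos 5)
Your overall strategy matches the paper's: both reduce to exhibiting, for each $n$, a ternary word of the shape $h\,c\,h^R$ whose greedy peeling of longest non-overlapping borders strips exactly one letter from each side at every stage. However, the heart of the argument --- your step (3), the border lemma --- is left unproven, and the mechanism you propose for it (taking the half-word $h$ squarefree) is the wrong invariant and would in fact fail. Each central factor $v$ in the peeling process is a palindrome, and for a palindrome a border of length $\ell$ is precisely a \emph{palindromic prefix} of length $\ell$, not a repetition: the length-$\ell$ suffix of a palindrome is the reversal of its length-$\ell$ prefix, so the two agree exactly when that prefix is itself a palindrome. Squarefreeness of $h$ rules out palindromic prefixes of length $2$ (the pattern $xx$) but not of length $3$ (the pattern $xyx$, e.g.\ $010$, which is squarefree and unavoidable in ternary squarefree words). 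So if the half-word read outward-in begins $010\cdots$, the full word has a non-overlapping border of length $3$ once $n\geq 6$, the first peel removes a block of length $3$, and the resulting width falls short of $n$. Your closing hedge identifies a subtlety at length $2$ --- exactly the one place squarefreeness does work --- and misses the real obstruction at odd lengths $\geq 3$.

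The correct condition is that $h$ (and each of its relevant suffixes encountered during peeling) avoid palindromic factors of lengths $2$ and $3$; avoiding these two lengths kills all longer palindromic factors as well, since any palindrome of length $\geq 4$ contains one of length $2$ or $3$ at its centre. Over three letters this condition essentially forces the periodic word $(012)^{\infty}$ up to renaming, and that is exactly the paper's witness: $(012)^{\lfloor n/6\rfloor}(210)^{\lfloor n/6\rfloor}$, with one of a short list of words ($0$, $00$, $010$, $0110$, $01010$) inserted centrally to reach lengths not divisible by $6$ (the central insert need only be checked directly, since the non-overlapping constraint controls borders there). Your plan is salvageable by replacing ``squarefree'' with ``no palindromic factor of length $2$ or $3$'' and then actually carrying out the verification, but as written the key lemma is both absent and, for the word you propose, false.
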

\begin{proof}
Clearly $f_k(n) \leq n$. We prove $f_k(n) \geq n$. If $n$ is divisible by $6$, then consider the word $(012)^{n/6}(210)^{n/6}$. If $n$ is not divisible by $6$, then take  $(012)^{\lfloor n/6\rfloor }(210)^{\lfloor n/6 \rfloor}$ and insert either $0$, $00$, $010$, $0110$, or $01010$ in the middle of the word. When calculating the smallest BP-factorization of the resulting words, it is easy to see that at each step we are removing a border of length $1$. Thus, their largest BP-factorization is of width $n$.
\end{proof}

\section{Equal smallest and largest BP-factorizations}\label{section:uniqueIB}
Recall back to Example~\ref{example:abra}, that ${\tt abracadabra}$ has distinct smallest and largest BP-factorizations, namely ${\tt abra}\cdot {\tt cad} \cdot {\tt abra}$ and ${\tt a} \cdot {\tt br} \cdot {\tt a} \cdot {\tt cad} \cdot {\tt a} \cdot {\tt br} \cdot {\tt a}$. However, the word ${\tt alfalfa}$ has the same smallest and largest BP-factorizations, namely ${\tt a}\cdot {\tt lf} \cdot {\tt  a}\cdot {\tt lf} \cdot {\tt a}$. Under what conditions are the smallest and largest BP-factorizations of a word the same? Looking at unique borders seems like a good place to start, since the shortest border and longest non-overlapping border coincide when a word has a unique border. However, the converse is not true---just consider the previous example ${\tt alfalfa}$. The shortest border and longest non-overlapping border are both ${\tt a}$, but ${\tt a}$ is not a unique border of ${\tt alfalfa}$.

In Theorem~\ref{theorem:SL} we characterize all words whose smallest and largest BP-factorization coincide.

\begin{theorem}\label{theorem:SL}
    Let $m,m'\geq 1$ be integers. Let $w$ be a word with smallest BP-factorization $w_{m'}'\cdots w_{1}'w_0'w_1'\cdots w_{m'}'$ and largest BP-factorization $w_{m}\cdots w_{1}w_0w_1\cdots w_m$. Then $m=m'$ and $w_{i}= w_{i}'$ for all $i$, $0 \leq i \leq m$ if and only if for all $i\neq 2$, $0< i\leq m$, we have that $w_i$ is the unique border of $w_{i}\cdots w_{1}w_0w_1\cdots w_i$ and for $i=2$ we have that either 
    \begin{enumerate}
        \item  $w_2$ is the unique border of $w_2w_1w_0w_1w_2$, or
        \item $w_2w_1w_0w_1w_2 = w_0w_1w_0w_1w_0$ where $w_0$ is the unique border of $w_0w_1w_0$.
    \end{enumerate}
\end{theorem}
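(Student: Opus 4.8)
The plan is to prove the biconditional in Theorem~\ref{theorem:SL} by peeling off blocks from the outside in, matching up the smallest and largest BP-factorizations block by block. The key structural fact, due to Goto et al.\ and recalled in the introduction, is that in the largest BP-factorization $w_i$ is the \emph{shortest} border of the central factor $c_i := w_i\cdots w_1 w_0 w_1 \cdots w_i$, while by definition of the smallest BP-factorization $w_i'$ is the \emph{longest non-overlapping} border of the corresponding central factor. So the two factorizations agree precisely when, at each peeling step, the shortest border equals the longest non-overlapping border of the current central factor. First I would set up an induction on $m$ (equivalently on $|w|$), handling the outermost block $w_m$ versus $w_{m'}'$: I claim that $m=m'$ forces $w_m=w_{m'}'$, and more to the point, the two full factorizations coincide iff $w_m$ equals \emph{both} the shortest border and the longest non-overlapping border of $w$, which (for $m>2$, where the central factor has length $>$ twice the block length in the relevant sense) is exactly the condition ``$w_m$ is the unique border of $w$.'' That uniqueness claim needs the standard lemma that if the shortest border and the longest non-overlapping border of a word $x$ coincide, then $x$ has a unique border --- this follows because any border strictly between them would be sandwiched, and any border longer than the longest non-overlapping one is overlapping, hence (by Fine--Wilf / the periodicity of overlapping borders) would generate a shorter border distinct from the shortest one unless things collapse; I would isolate this as a preliminary lemma, since it is the crux.

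The genuinely delicate point, and the reason $i=2$ is singled out, is the innermost nontrivial step. When we have peeled down to $c_2 = w_2 w_1 w_0 w_1 w_2$, the block $w_2$ has length at least $1$ and $w_1$ has length at least $1$, but $w_0$ may be empty or very short, so the ``non-overlapping'' constraint $|w_2| \le |c_2|/2$ can bite in a way it does not for larger $i$. Concretely, the shortest border of $c_2$ is $w_2$, and we need it to also be the longest non-overlapping border. If $w_2$ is already the longest border outright, we are in case~(1), the unique-border case, and the argument is as before. The exceptional case~(2) is exactly the configuration where $c_2$ has a longer border that happens to be non-overlapping: writing $c_2 = w_0 w_1 w_0 w_1 w_0$ with $w_0$ its unique border forces the longest non-overlapping border of $c_2$ to be $w_1 w_0$ (one checks $|w_1 w_0| \le |c_2|/2$ here because $|c_2| = 3|w_0| + 2|w_1|$ while peeling $w_1 w_0$ leaves the nonempty core $w_1$), and then the \emph{smallest} factorization peels $w_1 w_0$, then peels $w_1$ (unique border of $w_1 w_0 w_1$, since $w_0$ is unbordered... wait, $w_0$ is not claimed unbordered --- rather $w_0$ is the unique border of $w_0 w_1 w_0$), landing on core $w_0$; meanwhile the \emph{largest} factorization peels $w_0$ (shortest border of $c_2$), then must peel the shortest border of $w_1 w_0 w_1$, and the hypothesis that $w_0$ is the unique border of $w_0 w_1 w_0$ is precisely what is needed to guarantee this shortest border is $w_1$, so both factorizations produce the blocks $w_0, w_1, w_0$ in that order and then stop. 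So the proof amounts to: (i) for $i > 2$, coincidence $\iff$ unique border, via the preliminary lemma; (ii) for $i = 2$, enumerate the two ways $c_2$'s shortest and longest-non-overlapping borders can agree, which gives exactly (1) and (2).

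Accordingly, the steps in order are: (a) prove the preliminary lemma --- if a word $x$ has $|x| \ge$ (appropriate bound) and its shortest border equals its longest non-overlapping border $b$, and $2|b| < |x|$, then $b$ is the unique border of $x$; prove it by contradiction using the fact that the set of borders of $x$ is closed under the operation ``take the longest proper border'', and that any overlapping border of $x$ of length $\ell$ with $\ell < |x|$ yields a border of length $2\ell - |x|$, iterating down into the non-overlapping range. (b) Forward direction of the theorem: assume the two factorizations coincide; induct from the outside, at each step $i$ the common block is simultaneously the shortest and the longest non-overlapping border of $c_i$; apply lemma~(a) when $i > 2$; when $i = 2$, split into the two cases depending on whether $w_2$ is also the longest border, deriving the explicit form in case~(2) from the length arithmetic $|c_2| = 3|w_0| + 2|w_1|$ and the requirement that the peeled border in the smallest factorization be non-overlapping while leaving the correct core. (c) Converse: assume the stated border-uniqueness conditions; show by the same outside-in induction that the greedy shortest-border peeling (largest) and the greedy longest-non-overlapping-border peeling (smallest) make the identical choice at every step --- for $i > 2$ because the unique border leaves no choice, and at $i = 2$ either uniqueness again (case 1) or a direct check that in configuration $w_0 w_1 w_0 w_1 w_0$ both greedy processes emit $w_0, w_1, w_0$ (case 2), using that $w_0$ is the unique border of $w_0 w_1 w_0$ to pin down the shortest border of the length-$(2|w_1|+|w_0|)$ subword in the largest-factorization branch. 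The main obstacle I anticipate is getting the length inequalities in case~(2) exactly right --- verifying that $w_1 w_0$ is genuinely non-overlapping in $c_2$ and that no even-longer non-overlapping border intrudes --- and making sure the ``unique border of $w_0 w_1 w_0$'' hypothesis is both necessary and sufficient to force agreement on the remaining two inner blocks; everything else is bookkeeping on the induction.
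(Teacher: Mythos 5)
Your preliminary lemma (a) is false, and it is the load-bearing step of your forward direction. The claim ``if the shortest border of $x$ equals the longest non-overlapping border $b$ of $x$, then $b$ is the unique border of $x$'' fails for ${\tt alfalfa}$: both the shortest and the longest non-overlapping border are ${\tt a}$, yet ${\tt alfa}$ is a second (overlapping) border. Your own sketch flags the escape hatch (``unless things collapse''): iterating $\ell \mapsto 2\ell-|x|$ on an overlapping border can land exactly on $b$ rather than on a new border, and that collapse is precisely the configuration $x = b\,y\,b\,y\,b$. The point you are missing is that this configuration is \emph{not} automatically confined to level $i=2$; it can occur at the outermost level of a long word (e.g., $bacabacab$ has shortest border $=$ longest non-overlapping border $= b$ but also the border $bacab$, while its largest BP-factorization has width $9$). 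So no single-word lemma about $u_i$ can give you ``unique border'' for $i>2$. The paper's proof instead uses the coincidence hypothesis at level $i-1$: after deriving $u_i = w_i y w_i y w_i$ (which itself takes work --- one must exclude the interleaved possibility $u_i = w_i x w_i x' w_i x'' w_i$), it observes that $y$ is a non-overlapping border of the inner factor $y w_i y$, so the shortest border of $y w_i y$, which by hypothesis equals its longest non-overlapping border, must be $y$ itself; this forces $u_i$ to have width $5$ and hence $i=2$. Your outline has no substitute for this descent into level $i-1$, so the forward direction for $i>2$ has a genuine gap.

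Your analysis of case (2) is also wrong in its details. In $c_2 = w_0 w_1 w_0 w_1 w_0$ the word $w_1 w_0$ is in general not a border at all (the length-$|w_0 w_1|$ prefix is $w_0 w_1$, not $w_1 w_0$); the extra border is $w_0 w_1 w_0$, and the arithmetic $2|w_0|+|w_1| > \tfrac{1}{2}\bigl(3|w_0|+2|w_1|\bigr)$ shows it is \emph{overlapping}. That is exactly why the longest non-overlapping border of $c_2$ is still $w_0$ and the two greedy processes peel the \emph{same} block $w_0$ at that step. As written, your converse has the smallest factorization peeling $w_1 w_0$ while the largest peels $w_0$, which would mean the factorizations disagree --- the opposite of what you need to prove. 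The overall peel-from-outside strategy and the observation that coincidence means ``shortest border $=$ longest non-overlapping border at every level'' match the paper, but both the key lemma and the exceptional case need to be redone along the lines above.
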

\begin{proof}~\\
    $\Longrightarrow:$ Let $i$ be an integer such that $0< i \leq m$. Let $u_i=w_{i}\cdots w_{1}w_0w_1\cdots w_i$. Since $w_i$ is both the shortest border and longest non-overlapping border of $u_i$ (i.e., $w_i=w_i'$), we have that $u_i$ has exactly one border of length $\leq |u_i|/2$. Thus, either $w_{i}$ is the unique border of $u_i$, or $u_i$ has a border of length $>|u_i|/2$. If $w_i$ is the unique border of $u_i$, then we are done. So suppose that $u_i$ has a border of length $>|u_i|/2$. Let $v_i$ be the shortest such border. We have that $w_i$ is both a prefix and suffix of $v_i$. In fact, $w_i$ must be the unique border of $v_i$. Otherwise we contradict the minimality of $v_i$, or the assumption that $w_i$ is both the shortest border and longest non-overlapping border of $u_i$. Since $w_i$ is unbordered, it cannot overlap itself in $v_i$ and $w_i$. So we can write $v_i = w_i y w_i$ for some word $y$ where $u_i = w_iyw_iyw_i$, or $u_i = w_i x w_i x' w_i x'' w_i$ such that $y=xw_ix'=x'w_ix''$. If $u_i =  w_i x w_i x' w_i x'' w_i$, then we see that $w_ix'$ is a suffix of $y$ and $x'w_i$ is a prefix of $y$, implying that $w_ix'w_i$ is a new smaller border of $u_i$. This either contradicts the assumption that $v_i$ is the shortest border of length $>|u_i|/2$, or the assumption that $u_i$ has exactly one border of length $\leq |u_i|/2$. Thus, we have that $u_i = w_iyw_iyw_i$. The shortest border and longest non-overlapping border of $yw_iy$ must be $y$, by assumption. Additionally, $w_i$ is unbordered, so $u_i$ is of width $5$ and $i=2$. This implies that $w_i = w_2 = w_0$ and $y=w_1$. 
    
\bigskip\noindent 
    $\Longleftarrow:$ Let $i$ be an integer such that $0< i\leq m$. We omit the case when $i=0$, since proving $w_i = w_i'$ for all other $i$ is sufficient. Since $w_i$ is the unique border of $u_i = w_i\cdots w_1w_0w_1\cdots w_i$, we have that the shortest border and longest non-overlapping border of $u_i$ is $w_i$. In other words, we have that $w_i = w_i'$. Suppose $i=2$ and $u_2 = w_2w_1w_0w_1w_2 = w_0w_1w_0w_1w_0$ where $w_0$ is the unique border of $w_0w_1w_0$. Since $w_0$ is the unique border of $w_0w_1w_0$, it is also the shortest border of $u_2$. Additionally, the next longest border of $u_2$ is $w_0w_1w_0$, which is overlapping. So $w_0$ is also the longest non-overlapping border of $u_2$. Thus $w_2=w_2'$.
\end{proof}

Just based on this characterization, finding a recurrence for the number of words with a coinciding smallest and largest BP-factorization seems hard. So we turn to a different, related problem: counting the number of words with a unique border.

\subsection{Unique borders}
Harju and Nowotka~\cite{Harju&Nowotka:2005} counted the number $B_k(n)$ of length-$n$ words over $\Sigma_k$ with a unique border, and the number $B_k(n,t)$ of length-$n$ words over $\Sigma_k$ with a length-$t$ unique border. However, through personal communication with the authors, a small error in one of the proofs leading up to their formula for $B_k(n,t)$ was discovered. Thus, the formula for $B_k(n,t)$ as stated in their paper is incorrect. In this section, we present the correct recurrence for the number of length-$n$ words with a length-$t$ unique border. We also show that the probability a length-$n$ word has a unique border tends to a constant. See \href{http://oeis.org/A334600}{\underline{A334600}} in the OEIS~\cite{OEIS} for the sequence $(B_2(n))_{n\geq 0}$.

Suppose $w$ is a word with a unique border $u$. Then $u$ must be unbordered, and $|u|$ must not exceed half the length of $w$. If either of these were not true, then $w$ would have more than one border. By combining these ideas, we get Theorem~\ref{theorem:uniquet} and Theorem~\ref{theorem:unique}.

\begin{theorem}\label{theorem:uniquet}
Let $n>t\geq 1$ be integers. Then the number of length-$n$ words with a unique length-$t$ border satisfies the recurrence 
   \[ B_k(n,t) = \begin{cases} 
      0, & \text{if $n < 2 t$;} \\
      u_tk^{n-2t} - \sum_{i=2t}^{\lfloor n/2\rfloor} B_k(i,t)k^{n-2i}, & \text{if $n \geq 2t$ and $n+t$ odd;}\\
     u_t k^{n-2t} - B_k((n+t)/2,t)-\sum_{i=2t}^{\lfloor n/2\rfloor} B_k(i,t)k^{n-2i}, & \text{if $n \geq 2t$ and $n+t$ even.}
   \end{cases}
\]
\end{theorem}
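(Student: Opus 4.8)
The plan is to set up an inclusion-exclusion over the length of the shortest border, exploiting the structural fact that a word $w$ has a \emph{unique} border of length $t$ if and only if the length-$t$ prefix $u$ of $w$ is unbordered, $|u|=t\leq |w|/2$, and $w$ has no \emph{other} border. First I would record the two necessary conditions already noted in the excerpt: if $w$ has a unique border $u$ of length $t$, then $u$ is unbordered (otherwise a border of $u$ would be a shorter border of $w$) and $t\leq n/2$ (otherwise $u$ would overlap itself in $w$, forcing a shorter border as well, by the standard periodicity argument). This immediately gives the base case $B_k(n,t)=0$ for $n<2t$, and it tells us that in the remaining cases the length-$t$ prefix must range over the $u_t$ unbordered words of length $t$.

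Next I would count the length-$n$ words $w$ whose length-$t$ prefix \emph{and} length-$t$ suffix both equal a fixed unbordered word $u$, \emph{ignoring} all other borders: since $t\leq n/2$ the two occurrences of $u$ do not overlap, so the middle $n-2t$ positions are free, giving exactly $u_t k^{n-2t}$ such words in total (summing over the $u_t$ choices of $u$). This overcounts: some of these words have additional borders. The key step is to classify the "extra" borders. Any border $v$ of $w$ with $t<|v|$ and $|v|\leq n/2$ is itself a word of length $i:=|v|$ with $t\leq n/2$... — more precisely, if $w$ has a border $v$ of length $i$ with $2t\leq i$ and $u$ is still the \emph{shortest} border, then $v$ itself has unique shortest border $u$ of length $t$, hence $v$ is counted by $B_k(i,t)$; and given such a $v$ occurring as a prefix and suffix of $w$ with $i\leq n/2$ (non-overlapping), the middle $n-2i$ letters of $w$ are free, contributing $B_k(i,t)k^{n-2i}$. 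The range $2t\leq i\leq \lfloor n/2\rfloor$ is exactly the range over which a second-shortest border can live. The one subtlety is the case $i=(n+t)/2$: here the border $v$ of length $i$ overlaps its mirror copy at $n-i=(n-t)/2 < i$ positions, but because the overlap region has length $(n-t)/2 = i-t < i$... I would argue that when $n-i\geq t$ the two copies of $v$ still pin down $w$ uniquely (no free middle letters, whence the coefficient $k^{n-2i}=k^0=1$ only when $i=n/2$), and when $i$ exceeds $n/2$ the constraints become incompatible with $u$ being the shortest border. Concretely, $i$ ranges up to $\lfloor n/2\rfloor$, and the extra "$+1$"-type term $B_k((n+t)/2,t)$ appears precisely when $(n+t)/2$ is an integer, i.e. when $n+t$ is even, to account for the borderline overlapping case that the geometric-series terms $k^{n-2i}$ miss.

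I would then assemble the recurrence: start from $u_t k^{n-2t}$ (prefix = suffix = some unbordered length-$t$ word, middle free), and subtract, for each intermediate border length $i$ with $2t\leq i\leq \lfloor n/2\rfloor$, the count $B_k(i,t)k^{n-2i}$ of words whose shortest border is length $t$ but which also carry a length-$i$ border; plus, when $n+t$ is even, the additional correction $B_k((n+t)/2,t)$ for the single overlapping value $i=(n+t)/2$, whose geometric weight would otherwise be $k^{n-2i}=k^{-t}$ and is instead $1$. One must check no word is subtracted twice: if $w$ had two distinct intermediate borders of lengths $i_1<i_2$ in the subtracted range, then the length-$i_2$ border itself would have the length-$i_1$ word as a border, so it would not be counted by $B_k(i_2,t)$ (which requires the shortest border to have length exactly $t$) — this is the induction hypothesis doing its work, and it guarantees each bad word is removed exactly once, via its second-shortest border.

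The main obstacle, and the place I would spend the most care, is the overlapping case $i=(n+t)/2$: getting the combinatorics of the self-overlap right — showing that such a $v$ still determines $w$ with no degrees of freedom, that it genuinely occurs (so the term is needed rather than vacuous), and that it is not already accounted for among the non-overlapping $i\leq\lfloor n/2\rfloor$ terms. This requires the Fine–Wilf / periodicity lemma to control what a border of length greater than $n/2$ forces, together with a careful check that the only such border consistent with $u$ being the shortest border is the single one of length $(n+t)/2$ (any longer one would create a period that manufactures a border shorter than $u$). Everything else is bookkeeping with geometric sums and the definition of $u_n$ from Equation~\ref{equation:unbordered}.
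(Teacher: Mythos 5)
Your overall strategy coincides with the paper's: count the $u_t k^{n-2t}$ words of length $n\geq 2t$ whose length-$t$ prefix and suffix are a common unbordered word, then subtract the ones with an additional border, organizing the subtraction by the \emph{second-shortest} border $u'$ of $w$; since every proper border of $u'$ is a border of $w$ shorter than $u'$, the word $u'$ has $u$ as its unique border and is counted by $B_k(|u'|,t)$, and each bad $w$ is removed exactly once via its (uniquely determined) $u'$. All of that matches the paper and is correct.

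The genuine gap is precisely the step you defer to "the place I would spend the most care," and your partial reasoning there contains errors indicating the step is not actually in hand. First, the arithmetic: if $v$ is a border of length $i>n/2$, its two occurrences overlap in a region of length $2i-n$, not $(n-t)/2=i-t$ as you write; the quantity $n-i$ is the shift between the two occurrences, not the length of their overlap. Second, your claim that "when $i$ exceeds $n/2$ the constraints become incompatible with $u$ being the shortest border" is false, and is contradicted by the very term $B_k((n+t)/2,t)$ you are trying to justify, since $(n+t)/2>n/2$. The missing argument is short and needs no appeal to Fine--Wilf: the overlap region, read inside the prefix occurrence of $v$, is the length-$(2i-n)$ suffix of $v$, and read inside the suffix occurrence it is the length-$(2i-n)$ prefix of $v$; hence it is a nonempty proper border of $v$, hence a border of $w$ shorter than $v$, hence equal to $u$ because $v$ is the second-shortest border of $w$. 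This forces $2i-n=t$, i.e., $i=(n+t)/2$ (so this case arises only when $n+t$ is even), and writing $v=uv'u$ gives $w=uv'uv'u$, so $w$ is completely determined by $v$ with no free positions --- which is why that case contributes $B_k((n+t)/2,t)$ with coefficient $1$ rather than $k^{n-2i}$. Without this argument you have shown neither that $(n+t)/2$ is the only possible length of an overlapping second-shortest border, nor that the corresponding words are counted exactly once each.
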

\begin{proof}
Let $w$ be a length-$n$ word with a unique length-$t$ border $u$. Since $u$ is the unique border of $w$, it is unbordered. Thus, we can write $w=uvu$ for some (possibly empty) word $v$. For $n<2t$, we have that $B_k(n,t)=0$ since $u$ is unbordered and thus cannot overlap itself in $w$.

Suppose $n\geq 2t$. Let $\overline{B_k}(n,t)$ denote the number of length-$n$ words that have a length-$t$ unbordered border and have a border of length $>t$. Clearly $B_k(n,t) = u_tk^{n-2t} - \overline{B_k}(n,t)$. Suppose $w$ has another border $u'$ of length $>t$. Furthermore, suppose that there is no other border $u''$ with $|u| < |u''| < |u'|$. Then $u$ is the unique border of $u'$. Since $u$ is the shortest border, we have $|u| \leq n/2$. But we could possibly have $|u'|> n/2$. The only possible way for $|u'|$ to exceed $n/2$ is if $w=uv'uv'u$ for some (possibly empty) word $v$. But this is only possible if $n+t$ is even; otherwise we cannot place $u$ in the centre of $w$. When $n+t$ is odd, we compute $\overline{B_k}(n,t)$ by summing over all possibilities for $u'$ (i.e., $2t\leq |u'| \leq \lfloor n/2\rfloor$) and the middle part of $w$ (i.e., $v''$ where $w=u'v''u'$). This gives us the recurrence, \[\overline{B_k}(n,t) =\sum_{i=2t}^{\lfloor n/2\rfloor} B_k(i,t)k^{n-2i}.\]
When $n+t$ is even, we compute $\overline{B_k}(n,t)$ in the same fashion, except we also include the case where $|u'| = (n+t)/2$. This gives us the recurrence,
\[\overline{B_k}(n,t) =B_k((n+t)/2,t)+\sum_{i=2t}^{\lfloor n/2\rfloor} B_k(i,t)k^{n-2i}.\]
\end{proof}
\begin{theorem}\label{theorem:unique}
Let $n\geq  2$ be an integer. Then the number of length-$n$ words with a unique border is 
   \[ B_k(n) = \sum_{t=1}^{\lfloor n/2\rfloor}B_k(n,t).\]
\end{theorem}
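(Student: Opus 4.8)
The plan is to prove Theorem~\ref{theorem:unique} by a straightforward case analysis on the length $t$ of the unique border, combined with the observation that the borders realizing the ``unique border'' property are classified by their length. The key structural fact is that if $w$ has a unique border, then that border is necessarily unbordered (otherwise a border of the border would give a second, shorter border of $w$) and has length at most $|w|/2$ (otherwise the border would overlap itself in $w$, again producing a distinct second border). This is exactly the remark preceding Theorem~\ref{theorem:uniquet}, so I would invoke it directly.

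First I would observe that the set of length-$n$ words with a unique border is partitioned according to the length $t$ of that unique border: a word cannot simultaneously have a unique border of length $t_1$ and a unique border of length $t_2$ with $t_1 \neq t_2$, since having a unique border means having exactly one border, so its length is well-defined. Hence the count $B_k(n)$ is the sum over all admissible values of $t$ of $B_k(n,t)$, the number of length-$n$ words whose unique border has length exactly $t$. Second, I would pin down the range of $t$: by the structural fact above, a unique border has length $t \geq 1$ (borders are non-empty by definition) and $t \leq |w|/2 = n/2$, and since $t$ is an integer this means $1 \leq t \leq \lfloor n/2 \rfloor$. For $t$ outside this range there are no such words, so those terms contribute nothing and can be omitted from the sum. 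Combining these two points gives
\[
B_k(n) = \sum_{t=1}^{\lfloor n/2 \rfloor} B_k(n,t),
\]
which is the claimed identity.

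There is essentially no obstacle here; the statement is really just a bookkeeping consequence of Theorem~\ref{theorem:uniquet} and the definitions. The only point requiring a moment's care is the upper limit $\lfloor n/2 \rfloor$ on the summation: one must confirm that a unique border of a length-$n$ word genuinely cannot have length exceeding $n/2$, which follows because an unbordered word of length $> n/2$ appearing as both a proper prefix and a proper suffix of $w$ would have to overlap itself, and an unbordered word has no nontrivial self-overlap, so the prefix and suffix occurrences would coincide, forcing $w$ to equal that word — contradicting that the border is \emph{proper}. I would state this explicitly for completeness. The lower bound $t \geq 2$ is sometimes also relevant (a length-$1$ border is automatically unbordered and unique when it occurs exactly as prefix and suffix), but here $t=1$ is legitimately included, so the sum starts at $1$; I would note that $B_k(n,1)$ counts words beginning and ending with the same letter that have no other border, consistent with Theorem~\ref{theorem:uniquet}. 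With these remarks the proof is complete in a few lines.
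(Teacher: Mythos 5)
Your proposal is correct and matches the paper's (implicit) justification: the paper states this theorem without a separate proof, relying on the remark immediately preceding Theorem~\ref{theorem:uniquet} that a unique border must be unbordered and of length at most $n/2$, so $B_k(n)$ decomposes as a sum of $B_k(n,t)$ over $1 \leq t \leq \lfloor n/2 \rfloor$. Your write-up simply makes that partition-by-border-length argument explicit, which is exactly the intended reasoning.
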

\subsection{Limiting values}
We show that the probability that a random word of length $n$ has a unique border tends to a constant. Table~\ref{table:BProbTable} shows the behaviour of this probability as $k$ increases.

Let $P_{n,k}$ be the probability that a random word of length $n$ has a unique border.  Then

\[P_{n,k}=\frac{B_k(n)}{k^n}=\frac{1}{k^n} \sum_{i=1}^{\lfloor n/2\rfloor} B_k(n,i).\]
\begin{lemma}\label{lemma:UBconv}
Let $k\geq 2$ and $n\geq 2t\geq 2$ be integers. Then \[ \frac{B_k(n,t)}{k^n}\leq\frac{1}{k^{t}}.\]
\end{lemma}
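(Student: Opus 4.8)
The plan is to bound $B_k(n,t)$ directly, without unwinding the recurrence of Theorem~\ref{theorem:uniquet}. Recall that any word $w$ counted by $B_k(n,t)$ has a unique border $u$ of length $t$, and this border is necessarily unbordered; since $u$ is both a proper prefix and a proper suffix of $w$ and cannot overlap itself (being unbordered), we can write $w = u v u$ for a (possibly empty) word $v$ of length $n - 2t$. Thus $w$ is completely determined by the pair $(u, v)$, so the number of such $w$ is at most the number of pairs, i.e. $B_k(n,t) \leq u_t \, k^{n-2t}$, where $u_t$ counts the unbordered choices for $u$.

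From here the estimate is immediate. Since $u_t \leq k^t$ (unbordered words of length $t$ are a subset of all length-$t$ words), we get $B_k(n,t) \leq k^t \cdot k^{n-2t} = k^{n-t}$, and therefore
\[
\frac{B_k(n,t)}{k^n} \leq \frac{k^{n-t}}{k^n} = \frac{1}{k^t},
\]
which is exactly the claimed inequality. Note the hypothesis $n \geq 2t \geq 2$ is what guarantees $n - 2t \geq 0$ so that the factorization $w = uvu$ makes sense and the count $k^{n-2t}$ is a genuine (nonnegative-exponent) power.

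There is essentially no obstacle here: the only point requiring a moment's care is justifying $w = uvu$ with $|v| = n-2t$, i.e. that the unique border, being unbordered, has no self-overlap inside $w$ — but this is already observed in the text immediately before Theorem~\ref{theorem:uniquet} and reused in its proof. An alternative, slightly more self-contained route would be to induct on $n$ using the recurrence of Theorem~\ref{theorem:uniquet}: the leading term $u_t k^{n-2t} \leq k^{n-t}$ already gives the bound, and all subtracted terms are nonnegative (each $B_k(i,t) \geq 0$), so dropping them only weakens the inequality in the right direction. Either way the argument is a one-line domination; the direct bijective bound $B_k(n,t) \leq u_t k^{n-2t}$ is the cleanest and is the step I would write out.
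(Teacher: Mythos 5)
Your proof is correct and follows essentially the same route as the paper: both write $w = uvu$ with $|u| = t$ (using $t \leq n/2$) and conclude $B_k(n,t) \leq k^{n-t}$, your only addition being the intermediate (and unneeded) refinement $B_k(n,t) \leq u_t k^{n-2t}$ before relaxing $u_t \leq k^t$.
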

\begin{proof}
Let $w$ be a length-$n$ word. Suppose $w$ has a unique border of length $t$. Since $t\leq n/2$, we can write $w=uvu$ for some words $u$ and $v$ where $|u|=t$. But this means that $B_k(n,t)\leq k^{n-t}$, and the lemma follows.
\end{proof}

\begin{theorem}
Let $k\geq 2$ be an integer. Then the limit $P_k = \lim\limits_{n\to \infty} P_{n,k}$ exists.
\end{theorem}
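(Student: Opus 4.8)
The plan is to mimic the proof of the earlier theorem $E_k = \lim_{n\to\infty} E_{n,k}$, but at the level of the probability $P_{n,k}$ rather than the expected width. First I would write
\[
P_{n,k} = \frac{1}{k^n}\sum_{t=1}^{\lfloor n/2\rfloor} B_k(n,t)
\]
and observe that by Lemma~\ref{lemma:UBconv} each summand $B_k(n,t)/k^n$ is bounded above by $k^{-t}$, uniformly in $n$. The geometric series $\sum_{t\geq 1} k^{-t}$ converges for every $k\geq 2$, so the tails of the sum over $t$ are uniformly small: for any $\varepsilon>0$ there is a $T$ (independent of $n$) with $\sum_{t>T} B_k(n,t)/k^n < \varepsilon$ for all $n$. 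This reduces the question to understanding, for each fixed $t$, the behaviour of $B_k(n,t)/k^n$ as $n\to\infty$.

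Next I would show that for each fixed $t$, the limit $\beta_{k}(t) := \lim_{n\to\infty} B_k(n,t)/k^n$ exists. Here I would use the recurrence of Theorem~\ref{theorem:uniquet}. Dividing that recurrence by $k^n$ gives, in the $n+t$ odd case,
\[
\frac{B_k(n,t)}{k^n} = u_t k^{-2t} - \sum_{i=2t}^{\lfloor n/2\rfloor} \frac{B_k(i,t)}{k^{2i}},
\]
and similarly with an extra term $B_k((n+t)/2,t)/k^n$ in the $n+t$ even case. As $n\to\infty$ the correction term $B_k((n+t)/2,t)/k^n \le k^{((n+t)/2)-n} = k^{-(n-t)/2}\to 0$, and the sum $\sum_{i=2t}^{\lfloor n/2\rfloor} B_k(i,t)/k^{2i}$ converges as $n\to\infty$ because its terms are bounded by $k^{i-t}/k^{2i} = k^{-i-t}$, a convergent geometric series (again using Lemma~\ref{lemma:UBconv}). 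Hence $B_k(n,t)/k^n$ converges along both the even and odd subsequences of $n$ to the same value $u_t k^{-2t} - \sum_{i\ge 2t} B_k(i,t)/k^{2i}$, so $\beta_k(t)$ exists.

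Finally I would assemble the pieces: $P_{n,k} = \sum_{t=1}^{\lfloor n/2\rfloor} \big(B_k(n,t)/k^n\big)$ is a sum whose terms are dominated by the summable sequence $k^{-t}$ uniformly in $n$ and whose individual terms converge to $\beta_k(t)$; by the (Weierstrass) dominated-convergence / uniform-convergence argument for series, $\lim_{n\to\infty} P_{n,k} = \sum_{t=1}^{\infty} \beta_k(t)$, which is finite since $0\le \beta_k(t)\le k^{-t}$. This establishes that $P_k$ exists. The one step that needs a little care is the interchange of the limit in $n$ with the (growing) sum over $t$; I would justify it exactly as above by splitting off a uniform tail bound, so the main — though routine — obstacle is bookkeeping the two parity cases of the Theorem~\ref{theorem:uniquet} recurrence simultaneously and checking that the vanishing correction term does not disturb convergence. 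Alternatively, one could bypass the per-$t$ analysis entirely and argue directly that $(P_{n,k})_n$ is Cauchy, but the term-by-term approach is cleaner and parallels the treatment of $E_k$ already in the paper.
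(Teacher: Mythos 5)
Your proof is correct and takes essentially the same route as the paper, whose entire argument is the one-liner ``Follows from the definition of $P_{n,k}$, Lemma~\ref{lemma:UBconv}, and the direct comparison test for convergence.'' In fact your write-up is more complete than the paper's: the pointwise convergence of $B_k(n,t)/k^n$ for each fixed $t$ (which you extract from the recurrence of Theorem~\ref{theorem:uniquet}, checking that the parity-dependent correction term vanishes) and the justification for interchanging the limit in $n$ with the growing sum over $t$ are both steps the paper leaves entirely implicit, and your tail-splitting/dominated-convergence argument supplies exactly what is missing.
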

\begin{proof}
Follows from the definition of $P_{n,k}$, Lemma~\ref{lemma:UBconv}, and the direct comparison test for convergence.
\end{proof}

\begin{table}[H]

\centering
\begin{tabular}{|c|c|}
\hline
$k$ & $\approx  P_{k}$  \\
\hline
2 & 0.5155 \\ 
3 & 0.3910  \\ 
4 & 0.2922   \\ 
5 & 0.2302  \\ 
6 & 0.1890  \\ 
7 & 0.1599  \\ 
8 & 0.1384 \\ 
9 & 0.1219  \\ 
10 & 0.1089 \\ 
\vdots & \vdots  \\
100 & 0.0101 \\
\hline
\end{tabular}
\captionsetup{justification=centering}
\caption{Probability that a word has a unique border.}
\label{table:BProbTable}
\end{table}

\bibliographystyle{unsrt}
\bibliography{abbrevs,main}

\end{document}